\documentclass[11pt]{amsart}
\usepackage{amsmath,amssymb,amsthm,mathtools}
\usepackage[margin=1in]{geometry}
\usepackage{microtype}

\newcommand{\A}{\mathbf{A}}
\newcommand{\F}{\mathbf{F}}
\newcommand{\Tr}{\operatorname{Tr}}

\theoremstyle{plain}
\newtheorem{theorem}{Theorem}[section]
\newtheorem{proposition}[theorem]{Proposition}
\newtheorem{lemma}[theorem]{Lemma}

\newtheorem{conjecture}[theorem]{Conjecture}
\theoremstyle{definition}
\newtheorem{remark}[theorem]{Remark}

\begin{document}

\title{A counterexample to a conjecture of Thakur on Carlitz--Wieferich primes}
\author{David Niedbala Giraudin}
\address{Independent researcher, Meaux, France}
\thanks{ORCID 0009-0009-1526-1178.}
\date{July 2026 (v2)}

\begin{abstract}
Let $\A=\F_q[T]$ with $q$ a power of an odd prime $p$, let $[n]=T^{q^n}-T$, and
let $\rho$ be the Carlitz module. A monic prime $P$ of $\A$ is a
\emph{$c$-Wieferich prime} (to base $1$) if $\rho_P(1)\equiv1\pmod{P^2}$.
Thakur suggested in 2015, on the basis of limited data and of proofs in
degrees $2$ and $3$, that in odd characteristic every $c$-Wieferich prime has
degree divisible by $p$; the question was restated as open in 2024, and
Bamunoba and Bergstr\"om, after extensive computations, expressed the belief
that the statement holds in odd characteristic. We show that it is false: the
polynomial
\[
P(T)=T^5+(11+17c+9c^2)T^4+(3+7c+18c^2)T^3+(2+5c+6c^2)T^2+(3+3c+11c^2)T+(6+17c+5c^2)
\]
over $\F_{19^3}=\F_{19}[c]$, $c^3=8c^2+4c+11$, is an irreducible $c$-Wieferich
prime of degree $5$, and $19\nmid5$. We further give a closed form for the
resulting common factor: the polynomial $G=\mu(T^q-T)$, with
$\mu(X)=X^5+5X^3+3X^2-4X-9\in\F_{19}[X]$ and $q=19^3$, is squarefree of degree
$5\cdot19^3$ and divides both $[5]$ and Thakur's polynomial $M_5$; we conjecture
that in fact $G=\gcd([5],M_5)$, and we verify this for the part of low degree
over the prime field. Degree $5$ is the least possible degree of such a
counterexample, and exhaustive computations show that no counterexample exists
over the prime fields $\F_p$ in a substantial range of degrees and
characteristics. The proof that degree $5$ is minimal, and the method by which
the example was found, appear in a companion paper.
\end{abstract}

\maketitle

\section{Introduction}

Let $p$ be an odd prime, $q$ a power of $p$, and $\A=\F_q[T]$. Write
$[n]=T^{q^n}-T$ for $n\ge1$. The Carlitz module is the $\F_q$-algebra
homomorphism $\rho\colon\A\to\A\{\tau\}$ into the twisted polynomial ring
($\tau x=x^q\tau$) determined by $\rho_T=T+\tau$; for $a\in\A$ we write
$\rho_a(x)$ for the associated $\F_q$-linear polynomial. Following
Thakur~\cite{Tha15} and Bamunoba--Bergstr\"om~\cite{BB}, a monic prime
$P\in\A$ is called a \emph{$c$-Wieferich prime} (to base $1$) if
\begin{equation}\label{eq:def}
\rho_P(1)\equiv1\pmod{P^2}.
\end{equation}
(The congruence $\rho_P(1)\equiv1\pmod P$ holds for every prime $P$, this being
the Carlitz analogue of Fermat's little theorem; \eqref{eq:def} is the analogue
of the classical Wieferich condition $2^{p-1}\equiv1\bmod p^2$.)

In~\cite{Tha15}, Thakur introduced the quantities
\[
M_d=\sum_{k=0}^{d-1}(-1)^k[d-1][d-2]\cdots[d-k]\qquad(\text{the $k=0$ term being }1),
\]
proved that no $c$-Wieferich prime of degree $2$ or $3$ exists in odd
characteristic, and suggested, from this and limited numerical data, that for
$p>2$ a nontrivial common factor of $[d]$ and $M_d$ should force $p\mid d$ ---
equivalently, that every $c$-Wieferich prime of $\A$ has degree divisible by
$p$ (see Lemma~\ref{lem:equiv} below for the equivalence). The suggestion fails
in characteristic $2$, by examples of Mauduit (see~\cite{Tha15,BB});
Bamunoba and Bergstr\"om carried out extensive computations and stated that
they believe it to be true in odd characteristic~\cite{BB}. The question was
restated as open by Thakur in 2024~\cite{Tha24}.

The purpose of this note is to show that the statement is false in odd
characteristic.

\begin{theorem}\label{thm:main}
Let $q=19^3$ and $\F_q=\F_{19}[c]$ with $c^3=8c^2+4c+11$. The polynomial
\[
P(T)=T^5+(11+17c+9c^2)T^4+(3+7c+18c^2)T^3+(2+5c+6c^2)T^2+(3+3c+11c^2)T+(6+17c+5c^2)
\]
is an irreducible $c$-Wieferich prime of degree $5$ in $\F_{19^3}[T]$.
Consequently $\gcd([5],M_5)\neq1$ in $\F_{19^3}[T]$ although $19\nmid5$.
Moreover $P$ is not fixed by any nontrivial additive translation
$T\mapsto T+\gamma$.
\end{theorem}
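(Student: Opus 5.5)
The proof is a finite computation in $\F_{q}[T]/(P^2)$ with $q=19^3$. The only real difficulty is organising it so that each step can be checked, and so that the Wieferich condition is verified without ever writing down $\rho_P(1)$, whose degree is of order $q^5$.

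\emph{Irreducibility.} We work in $R=\F_q[T]/(P)$ and apply Rabin's test for a quintic. Because $5$ is prime, $P$ is irreducible if and only if $T^{q^5}\equiv T\pmod P$ and $\gcd(T^{q}-T,P)=1$. Both conditions come from repeated $q$-th powering in $R$ (equivalently, repeated $19$-th powering, fifteen times, together with the Frobenius of $\F_q$ acting on the coefficients). A polynomial of degree $5$ with no root in $\F_q$ could still factor as $2+3$, so we also check $\gcd(T^{q^2}-T,P)=1$. This confirms that $P$ is a monic prime of degree $5$.

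\emph{Wieferich condition.} Write $P=\sum_{i} a_i T^i$, so that $\rho_P=\sum_i a_i\rho_{T^i}$. Put $u_0=1$ and $u_{i+1}=\rho_T(u_i)=Tu_i+u_i^{q}$, all taken in $S=\F_q[T]/(P^2)$. Then $\rho_P(1)=\sum_{i=0}^5 a_i u_i \bmod P^2$. This needs only five applications of $x\mapsto Tx+x^q$ in a ring of dimension $10$ over $\F_q$. Each $q$-th power is computed as $x^{19}$ iterated three times, or through a precomputed matrix of the $\F_{19}$-linear map $x\mapsto x^{q}$ on $S$ viewed as an $\F_{19}$-space of dimension $30$. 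We then check that the result equals $1$. The consequence $\gcd([5],M_5)\ne1$ follows from Lemma~\ref{lem:equiv}: a $c$-Wieferich prime of degree $d$ divides both $[d]$ and $M_d$. Since $P\mid[5]$ is already guaranteed by irreducibility, the content of the lemma is exactly that $P\mid M_5$. As an independent cross-check, we reduce $M_5=1-[4]+[4][3]-[4][3][2]+[4][3][2][1]$ modulo $P$, computing each $[k]\bmod P$ as $T^{q^k}-T$ from the powers $T^{q^k}$ already available in $R$.

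\emph{No translation invariance.} Suppose $P(T+\gamma)=P(T)$ with $\gamma\in\F_q^\times$. Comparing the $T^4$ coefficients gives $5\gamma+a_4=a_4$, so $5\gamma=0$. Since $19\nmid 5$, this forces $\gamma=0$. (One could also compare the $T^3$ coefficient, but the $T^4$ coefficient already settles it.)

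The main obstacle is practical rather than conceptual. All the arithmetic is in $\F_{19^3}$ using the stated minimal polynomial $c^3=8c^2+4c+11$, and the Wieferich step depends on every coefficient of $P$ being exactly right. So the plan is to carry out the calculation in two independent ways, the direct Carlitz recursion above and the reduction of $M_5$ modulo $P$ (the latter only if the proof of Lemma~\ref{lem:equiv} gives $P\mid M_5$ precisely under \eqref{eq:def}), and to present the intermediate values $u_i \bmod P^2$ so that a reader can verify them with any computer algebra system. Before any of this, we check that $c^3-8c^2-4c-11$ has no root in $\F_{19}$, so that it is irreducible and $\F_{19}[c]$ really is $\F_{19^3}$.
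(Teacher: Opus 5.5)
Your proposal is correct and follows the paper's proof. Both reduce the theorem to exact computations: irreducibility, $\rho_P(1)\equiv 1 \pmod{P^2}$ computed modulo $P^2$ with a cross-check $P\mid M_5$ via $[k]\bmod P$, and the $T^4$-coefficient argument $5\gamma=0$ of Lemma~\ref{lem:nonfixed}, with the only cosmetic difference that the paper builds the $\tau$-coefficients of $\rho_P$ by a Horner scheme and sums them, while you evaluate $\rho_{T^i}(1)$ directly. Your extra check $\gcd(T^{q^2}-T,P)=1$ is harmless but redundant, since $T^{q^5}\equiv T\pmod P$ already rules out a $2+3$ splitting (an irreducible quadratic divides $[5]$ only if $2\mid 5$).
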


The last assertion is automatic (Lemma~\ref{lem:nonfixed}) and is recorded
because of its significance: the \emph{fixed} $c$-Wieferich primes, i.e.\ those
invariant under a translation, necessarily have degree divisible by $p$, and
the computations of~\cite{BB} produced only fixed examples in odd
characteristic; Theorem~\ref{thm:main} therefore also answers negatively the
stronger expectation, formulated in~\cite{BB}, that no non-fixed $c$-Wieferich
prime exists in odd characteristic.

The example does not come from a search: the space of candidates is far beyond
exhaustion (about $3\cdot10^{18}$ monic quintics over $\F_{19^3}$, or
$19^{12}\approx2.2\cdot10^{15}$ candidates in the parametrization
of~\cite[Thm.~3.4]{BB}). It was produced by a systematic method which reduces
the existence of a $c$-Wieferich prime of prescribed degree in a prescribed odd
characteristic to a finite, effective computation, and which explains both the
non-existence results in low degree and the sporadic failure exhibited here.
The method, together with the proof of Theorem~\ref{thm:deg4} below, is
presented in the companion paper~\cite{NG2}, where the exactness assertion
(Conjecture~\ref{conj:exact}) is also reduced to a single explicit question and
verified in the range accessible to exact computation.

Beyond the counterexample itself, we prove here a closed form for the resulting
common factor (Theorem~\ref{thm:closed}: it is $\mu(T^q-T)$ for an explicit
quintic $\mu$ with coefficients in the prime field), we discuss the minimality
of the degree (degrees $1$, $2$ and $3$ are excluded by~\cite{Tha15} (degree
$1$ trivially; see also~\cite{Bam17}), and degree $4$ by
Theorem~\ref{thm:deg4}), and we report exhaustive computations showing that no
counterexample exists over the prime fields $\F_p$ themselves in a substantial
range (Proposition~\ref{prop:grid}).

All computations in this paper are exact (no floating point), were carried out
in standard public computer-algebra systems, and were reproduced on independent
engines; see Section~\ref{sec:method} and the Appendix, which contains complete
verification code. A statement on the methodology of this work, including the
role of AI assistance, is given in Section~\ref{sec:method}.

\section{Preliminaries}

We collect standard facts and elementary lemmas. Throughout, $q$ is a power of
the odd prime $p$, and $\theta$ denotes a root of the monic prime
$P\in\A=\F_q[T]$ of degree $d$, so that $\F_q(\theta)=\F_{q^d}$.

Following~\cite{BB}, define $F_0=1$ and $F_i=(-1)^i+[i]F_{i-1}$ for $i\ge1$. An
immediate induction gives $F_{d-1}=(-1)^{d-1}M_d$, so $P\mid M_d$ if and only if
$P\mid F_{d-1}$. Evaluating at $\theta$ and writing
\[
y_j=[j](\theta)=\theta^{q^j}-\theta\qquad(1\le j\le d-1),
\]
one obtains the nested form
\begin{equation}\label{eq:nested}
(-1)^{d-1}F_{d-1}(\theta)=M_d(\theta)=1-y_{d-1}\bigl(1-y_{d-2}(\cdots(1-y_1)\cdots)\bigr).
\end{equation}

\begin{lemma}[{\cite[Prop.~1.2]{BB}}]\label{lem:equiv}
A monic prime $P$ of degree $d$ is a $c$-Wieferich prime if and only if
$P\mid F_{d-1}$, equivalently $P\mid M_d$, equivalently $M_d(\theta)=0$.
\end{lemma}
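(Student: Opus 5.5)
The plan is to compute the coefficients of the $\F_q$-linear polynomial $\rho_P(x)$ modulo $P^2$, using only the commutation relation $\rho_P\rho_T=\rho_T\rho_P$. Write
\[
\rho_P=\sum_{i=0}^{d}c_i\tau^i,\qquad c_i\in\A,\quad c_0=P,\quad c_d=1.
\]
Then $\rho_P(1)=\sum_{i=0}^d c_i$. Comparing coefficients of $\tau^i$ in $\bigl(\sum c_i\tau^i\bigr)(T+\tau)=(T+\tau)\bigl(\sum c_i\tau^i\bigr)$ gives, for $1\le i\le d$,
\[
c_i\,[i]=c_{i-1}^q-c_{i-1}.
\]
Since $\rho_P(x)\equiv x^{q^d}\pmod P$, we have $P\mid c_i$ for $0\le i\le d-1$. (This can also be read off inductively from the recursion, using that $[i]$ is prime to $P$ for $1\le i\le d-1$.) Write $c_i=Pu_i$ for $0\le i\le d-1$, so that $u_0=1$.

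Next reduce the recursion modulo $P^2$ for $1\le i\le d-1$. Since $P\mid c_{i-1}$ and $q\ge2$, we have $c_{i-1}^q\equiv0\pmod{P^2}$. Hence $P u_i[i]\equiv-Pu_{i-1}\pmod{P^2}$, that is,
\[
u_i\,[i]\equiv-u_{i-1}\pmod P .
\]
Evaluating at $\theta$, $[i](\theta)=y_i=\theta^{q^i}-\theta\neq0$ for $1\le i\le d-1$, because $\theta$ has degree exactly $d$ over $\F_q$. Therefore
\[
u_i(\theta)=\frac{(-1)^i}{y_1y_2\cdots y_i}\qquad(0\le i\le d-1).
\]

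Now $\rho_P(1)-1=\sum_{i=0}^{d-1}c_i=P\sum_{i=0}^{d-1}u_i$, so $P$ is $c$-Wieferich if and only if $\sum_{i=0}^{d-1}u_i(\theta)=0$. Multiply this sum by the nonzero quantity $y_1\cdots y_{d-1}$ and reindex with $k=d-1-i$:
\[
\sum_{i=0}^{d-1}(-1)^i\,y_{i+1}\cdots y_{d-1}=(-1)^{d-1}\sum_{k=0}^{d-1}(-1)^k y_{d-1}\cdots y_{d-k}=(-1)^{d-1}M_d(\theta).
\]
So the condition is exactly $M_d(\theta)=0$, i.e.\ $P\mid M_d$. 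The remaining equivalence with $P\mid F_{d-1}$ is the identity $F_{d-1}=(-1)^{d-1}M_d$ noted above, proved by the immediate induction from $F_i=(-1)^i+[i]F_{i-1}$.

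The only delicate point is justifying the modulo-$P^2$ reduction. It needs both that every $c_{i-1}$ with $i\le d-1$ is divisible by $P$, so that its $q$-th power vanishes modulo $P^2$, and that each $[i]$ with $1\le i\le d-1$ is invertible modulo $P$. Both facts hold because $P$ is irreducible of degree exactly $d$.
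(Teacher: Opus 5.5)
Your proof is correct and complete. The paper does not prove this lemma itself. It quotes \cite[Prop.~1.2]{BB} and records only the identity $F_{d-1}=(-1)^{d-1}M_d$ and the nested form \eqref{eq:nested}. Your argument therefore supplies a self-contained derivation from the commutation relation $\rho_P\rho_T=\rho_T\rho_P$. It uses exactly the two inputs you flag: $P$ divides $c_0,\dots,c_{d-1}$, and $[1],\dots,[d-1]$ are invertible modulo $P$.

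One simplification is worth noting. The sum you get after clearing denominators, $\sum_{i=0}^{d-1}(-1)^i y_{i+1}\cdots y_{d-1}$, is literally $F_{d-1}(\theta)$, because unrolling $F_i=(-1)^i+[i]F_{i-1}$ gives $F_{d-1}=\sum_{i=0}^{d-1}(-1)^i[i+1]\cdots[d-1]$. In fact your recursion $u_i[i]\equiv-u_{i-1}\pmod P$ yields, without evaluating at $\theta$, the congruence
\[
[1][2]\cdots[d-1]\,\bigl(\rho_P(1)-1\bigr)\equiv P\,F_{d-1}\pmod{P^2}.
\]
Since $[1]\cdots[d-1]$ is prime to $P$, this gives the equivalence with $P\mid F_{d-1}$ in one step. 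It also explains why $F_i$ is the natural quantity in \cite{BB}. Your reindexing to $M_d$ then amounts to the identity $F_{d-1}=(-1)^{d-1}M_d$ that the paper records.
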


\begin{lemma}\label{lem:sqfree}
$[n]=T^{q^n}-T$ is the product of all monic primes of $\A$ of degree dividing
$n$; in particular $[n]$ is squarefree, and every irreducible polynomial of
degree $n$ divides $[n]$. (See, e.g.,~\cite[Ch.~2]{Ros}.)
\end{lemma}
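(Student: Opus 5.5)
The plan is to prove the three assertions in order: squarefreeness first, then the identification of the prime divisors of $[n]$, and finally the factorization, which follows by combining the two.

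\emph{Squarefreeness.} The formal derivative of $[n]=T^{q^n}-T$ is $q^nT^{q^n-1}-1=-1$, since $q^n\equiv0$ in characteristic $p$. Hence $\gcd([n],[n]')=1$. A repeated factor $Q^2\mid[n]$ would force $Q\mid[n]'$, so $[n]$ has no repeated irreducible factor.

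\emph{Primes of degree dividing $n$ divide $[n]$.} Let $Q$ be a monic prime of degree $d\mid n$, and let $\theta$ be the class of $T$ in $\A/Q\cong\F_{q^d}$. Every element $x$ of a field with $q^d$ elements satisfies $x^{q^d}=x$. Iterating the map $x\mapsto x^{q^d}$ a total of $n/d$ times gives $\theta^{q^n}=\theta$. This says $T^{q^n}\equiv T\pmod Q$, that is, $Q\mid[n]$. Taking $d=n$ gives the last clause of the lemma.

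\emph{Conversely, a prime factor of $[n]$ has degree dividing $n$.} Suppose $Q\mid[n]$ with $\deg Q=d$, and keep $\theta$ as above, so $\theta^{q^n}=\theta$. The map $\sigma(x)=x^{q^n}$ is a field automorphism of $\F_q(\theta)=\F_{q^d}$ that fixes $\F_q$. Since it also fixes the generator $\theta$, it is the identity. The Galois group of $\F_{q^d}/\F_q$ is cyclic of order $d$, generated by $x\mapsto x^q$, and $\sigma$ is its $n$-th power. Since $\sigma$ is trivial, $d\mid n$.

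\emph{Conclusion.} The polynomial $[n]$ is monic and squarefree, and its monic irreducible factors are exactly the monic primes of degree dividing $n$. Therefore $[n]$ equals their product. As an optional check, comparing degrees gives the usual identity $q^n=\sum_{d\mid n}d\,N_d$, where $N_d$ is the number of monic primes of degree $d$; this is not needed for the lemma.

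No step is a real obstacle. The only point requiring care is the converse direction, where one must argue through the generator $\theta$ and the cyclic Galois group, rather than merely observing that $\theta$ lies in $\F_{q^n}$.
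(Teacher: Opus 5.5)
Your proof is correct: $[n]'=-1$ gives squarefreeness, and the Frobenius argument in $\A/Q\cong\F_{q^{\deg Q}}$ correctly shows that the prime divisors of $[n]$ are exactly the monic primes of degree dividing $n$. The paper gives no proof of its own and simply cites Rosen, Ch.~2, for this classical fact; your argument is the standard textbook proof.
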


\begin{lemma}\label{lem:nolinear}
$\gcd([d],M_d)$ has no factor of degree $1$. Indeed, for $a\in\F_q$ one has
$[i](a)=a^{q^i}-a=0$ for all $i\ge1$, whence $M_d(a)=1\neq0$.
\end{lemma}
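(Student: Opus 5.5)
The statement to prove is Lemma~\ref{lem:nolinear}, and the argument given inline is essentially complete. I would just expand it into the following steps.

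First I reduce the claim to evaluating $M_d$ at a single point. Suppose $T-a$ divides $\gcd([d],M_d)$ for some $a\in\F_q$. Since $T-a$ is a monic linear factor of $M_d$, we get $M_d(a)=0$. So it suffices to show $M_d(a)\neq0$ for every $a\in\F_q$. We do not even need the condition that $T-a$ divides $[d]$; every $a\in\F_q$ is a root of $[d]$ anyway, by Lemma~\ref{lem:sqfree}.

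Next I evaluate each factor $[i]$ at $a$. Because $a\in\F_q$ satisfies $a^q=a$, iterating gives $a^{q^i}=a$ for all $i\ge1$. Hence $[i](a)=a^{q^i}-a=0$.

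Finally I substitute into the definition of $M_d$. Every term with $k\ge1$ contains at least one factor $[d-j]$ with $1\le d-j\le d-1$, so every such term vanishes at $a$. Equivalently, in the nested form \eqref{eq:nested}, the outermost factor is $y_{d-1}=0$ (for $d\ge2$). Only the $k=0$ term survives, so $M_d(a)=1\neq0$. The degenerate case $d=1$ gives $M_1=1$ directly.

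This contradicts $M_d(a)=0$, so no linear factor exists. There is no real obstacle. The only care needed is that each index $d-j$ with $k\ge1$ is at least $1$, so the vanishing $[i](a)=0$ applies to it.
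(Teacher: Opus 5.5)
Your proposal is correct and follows the paper's argument: every $a\in\F_q$ satisfies $[i](a)=0$ for $i\ge1$, so every term of $M_d$ with $k\ge1$ vanishes at $a$ and $M_d(a)=1\neq0$. Your extra remarks are consistent with the paper and only make explicit what it leaves implicit: the separate treatment of $d=1$, the check that all indices are at least $1$, and the implicit point that a linear factor over $\F_q$ has its root in $\F_q$.
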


\begin{lemma}[Translation invariance]\label{lem:transl}
Let $P$ be a $c$-Wieferich prime of degree $d$ with root $\theta$, and let
$a\in\F_q$. Then $P(T-a)$, the minimal polynomial of $\theta+a$, is again a
$c$-Wieferich prime of degree $d$.
\end{lemma}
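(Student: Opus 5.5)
The plan is to verify the criterion of Lemma~\ref{lem:equiv} directly at the translated root: $P$ is $c$-Wieferich exactly when $M_d(\theta)=0$, so I will check that $M_d(\theta+a)=0$ and then identify the minimal polynomial of $\theta+a$. Fix $a\in\F_q$ and put $\theta'=\theta+a$.

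First, the degree. $\F_q(\theta')=\F_q(\theta)=\F_{q^d}$, so $\theta'$ has degree $d$ over $\F_q$. Its minimal polynomial is monic irreducible of degree $d$. Since $P(\theta'-a)=P(\theta)=0$, the monic degree-$d$ polynomial $P(T-a)$ vanishes at $\theta'$ and therefore equals this minimal polynomial. In particular $P(T-a)$ is a monic prime of degree $d$.

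Next, the quantities $y_j$. Because $a\in\F_q$, we have $a^{q^j}=a$. Hence
\[
y_j'=[j](\theta')=(\theta+a)^{q^j}-(\theta+a)=\theta^{q^j}+a-\theta-a=y_j
\]
for every $1\le j\le d-1$. The Frobenius is additive in characteristic $p$, so this expansion is valid.

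Finally, the nested expression. The nested form \eqref{eq:nested} expresses $M_d$ at a root purely in terms of $y_1,\dots,y_{d-1}$. The same holds directly from the definition of $M_d$, which is a polynomial in the $[j]$ for $j\le d-1$. Since the $y_j'$ coincide with the $y_j$, we get $M_d(\theta')=M_d(\theta)$. The right-hand side is $0$ by Lemma~\ref{lem:equiv}, because $P$ is $c$-Wieferich. Applying Lemma~\ref{lem:equiv} once more, now to the prime $P(T-a)$ with root $\theta'$, shows that $P(T-a)$ is $c$-Wieferich.

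No step is a genuine obstacle. The only point needing care is that Lemma~\ref{lem:equiv} is applied to the correct monic prime, namely the minimal polynomial of $\theta'$, and the degree argument above settles this.
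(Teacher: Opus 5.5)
Your proof is correct and follows the paper's argument: both use $a^{q^j}=a$ to get $[j](\theta+a)=\theta^{q^j}-\theta=y_j$, deduce $M_d(\theta+a)=M_d(\theta)=0$, and then apply Lemma~\ref{lem:equiv} to $P(T-a)$. Your check that $P(T-a)$ is monic irreducible of degree $d$ and is the minimal polynomial of $\theta+a$ spells out a point the paper takes for granted in the statement.
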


\begin{proof}
$[j](\theta+a)=(\theta+a)^{q^j}-(\theta+a)=\theta^{q^j}-\theta=y_j$ for every
$j$, since $a^{q^j}=a$. Hence $M_d(\theta+a)=M_d(\theta)=0$, and
Lemma~\ref{lem:equiv} applies to the prime $P(T-a)$.
\end{proof}

\begin{lemma}\label{lem:nonfixed}
If $p\nmid d$, then $P(T+\gamma)\neq P(T)$ for every $\gamma\in\F_q^\times$;
consequently the $q$ translates $P(T-a)$, $a\in\F_q$, are pairwise distinct.
\end{lemma}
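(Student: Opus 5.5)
The plan is to compare coefficients of $T^{d-1}$. Write $P(T)=T^d+a_{d-1}T^{d-1}+\cdots+a_0$ with $a_i\in\F_q$. By the binomial theorem,
\[
(T+\gamma)^d=T^d+d\gamma T^{d-1}+(\text{lower-order terms}),
\]
and every $a_iT^i$ with $i\le d-1$ becomes $a_i(T+\gamma)^i$, which contributes to $T^{d-1}$ only when $i=d-1$, and then with coefficient $a_{d-1}$. Hence the coefficient of $T^{d-1}$ in $P(T+\gamma)$ is $a_{d-1}+d\gamma$.

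If $P(T+\gamma)=P(T)$, comparing this with the coefficient $a_{d-1}$ of $T^{d-1}$ in $P(T)$ gives $d\gamma=0$ in $\F_q$. Since $p\nmid d$, the element $d$ is invertible in $\F_q$, so $\gamma=0$. This contradicts $\gamma\in\F_q^\times$ and proves the first assertion.

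For the consequence, suppose $P(T-a)=P(T-b)$ with $a,b\in\F_q$. Substitute $T\mapsto T+a$ to get $P(T)=P(T+(a-b))$. By the first part, $a-b=0$, so the $q$ translates are pairwise distinct.

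There is no real obstacle. The only point to keep in mind is the case $d=1$, where $d\gamma=\gamma$ and the argument goes through unchanged. The hypothesis that $P$ is prime or $c$-Wieferich plays no role; the statement holds for any monic polynomial of degree $d$ with $p\nmid d$. Combined with Lemma~\ref{lem:transl}, this shows that a $c$-Wieferich prime of degree prime to $p$ has $q$ distinct translates, all of which are $c$-Wieferich.
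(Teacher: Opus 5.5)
Your proposal is correct and follows essentially the same route as the paper: compare the coefficients of $T^{d-1}$ in $P(T+\gamma)$ and $P(T)$ to get $d\gamma=0$, hence $\gamma=0$ since $p\nmid d$. You also spell out the deduction of pairwise distinctness via $T\mapsto T+a$, which the paper leaves implicit.
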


\begin{proof}
Comparing the coefficients of $T^{d-1}$ in $P(T+\gamma)$ and $P(T)$ gives
$d\gamma=0$, hence $\gamma=0$ when $p\nmid d$.
\end{proof}

\begin{lemma}[Norm form of a translation class]\label{lem:norm}
Let $P$ be monic of degree $d$ with roots $\theta_1,\dots,\theta_d$ (in an
algebraic closure), and set
$R_P(X)=\prod_{j=1}^d\bigl(X-(\theta_j^q-\theta_j)\bigr)\in\F_q[X]$, which equals
the characteristic polynomial of multiplication by $\theta^q-\theta$ on
$\A/(P)$. Then
\[
\prod_{a\in\F_q}P(T-a)=R_P\bigl(T^q-T\bigr).
\]
\end{lemma}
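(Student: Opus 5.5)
Both sides are monic of degree $qd$ in $T$, so I plan to prove the identity by comparing roots with multiplicity in an algebraic closure $\overline{\F_q}$. I first dispose of the side claim about $R_P$. Work in the algebra $\A/(P)$, or, if $P$ is not irreducible, note that the characteristic polynomial of multiplication by $g(\theta)$ on $\F_q[T]/(P)$ is $\prod_j (X-g(\theta_j))$ for any $g\in\F_q[T]$. This is standard: triangularize multiplication by $T$ over $\overline{\F_q}$, or use the resultant formula $\operatorname{Res}_T(P(T),X-g(T))$. Taking $g=T^q-T$ gives the stated description, and it also shows $R_P\in\F_q[X]$. Alternatively, the coefficients of $R_P$ are symmetric functions of the $\theta_j$, hence lie in $\F_q$.

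Next I factor the right-hand side. For fixed $j$, the polynomial $T^q-T-(\theta_j^q-\theta_j)$ in $T$ has the roots $\theta_j+a$ for $a\in\F_q$. Indeed, $(\theta_j+a)^q-(\theta_j+a)=\theta_j^q-\theta_j$ because $a^q=a$. These are $q$ distinct roots of a monic polynomial of degree $q$, so
\[
T^q-T-(\theta_j^q-\theta_j)=\prod_{a\in\F_q}\bigl(T-\theta_j-a\bigr).
\]
Therefore
\[
R_P(T^q-T)=\prod_{j=1}^d\prod_{a\in\F_q}(T-a-\theta_j).
\]

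For the left-hand side, $P(T-a)=\prod_j\bigl((T-a)-\theta_j\bigr)$ for each $a\in\F_q$. Taking the product over $a$ gives exactly the same double product, after interchanging the order of the two finite products.

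No step is a real obstacle. The only point to handle carefully is the identification of $R_P$ with a characteristic polynomial when $P$ has repeated roots. I will treat this uniformly with the resultant or triangularization argument, so that the root-counting with multiplicity stays valid throughout.
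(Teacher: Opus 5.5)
Your proof is correct and is essentially the paper's argument. The paper also expands both sides into the double product $\prod_j\prod_{a\in\F_q}(T-a-\theta_j)$, but it obtains the factorization $T^q-T-(\theta_j^q-\theta_j)=\prod_a(T-\theta_j-a)$ by substituting $Y=T-\theta_j$ into $\prod_a(Y-a)=Y^q-Y$ rather than by counting roots, and it treats the characteristic-polynomial description of $R_P$ as standard instead of justifying it as you do.
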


\begin{proof}
Using $\prod_{a\in\F_q}(Y-a)=Y^q-Y$ with $Y=T-\theta_j$,
\[
\prod_{a\in\F_q}P(T-a)=\prod_j\prod_a(T-a-\theta_j)
=\prod_j\bigl((T-\theta_j)^q-(T-\theta_j)\bigr)
=\prod_j\bigl(T^q-T-(\theta_j^q-\theta_j)\bigr).
\]
\end{proof}

Finally we record the parametrization underlying the exhaustive computations of
Section~\ref{sec:grid}; it is essentially~\cite[Thms.~3.3--3.4]{BB}, formulated
additively.

\begin{lemma}\label{lem:H90}
Let $d\ge2$. There exists a $c$-Wieferich prime of degree $d$ in $\F_q[T]$ if
and only if there exists $\eta\in\F_{q^d}$ with
$\Tr_{\F_{q^d}/\F_q}(\eta)=0$, whose partial sums
$s_j=\eta+\eta^q+\cdots+\eta^{q^{j-1}}$ are nonzero for $1\le j\le d-1$, and
which satisfies $1-s_{d-1}\bigl(1-s_{d-2}(\cdots(1-s_1)\cdots)\bigr)=0$.
\end{lemma}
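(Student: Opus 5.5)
Lemma~\ref{lem:H90} follows from Lemma~\ref{lem:equiv}: the substitution $\eta=\theta^q-\theta$ identifies the nested form \eqref{eq:nested} with the stated condition on partial sums. Additive Hilbert~90 lets us go back from $\eta$ to $\theta$. I will prove the two directions separately.

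\emph{Forward direction.} Suppose $P$ is a $c$-Wieferich prime of degree $d\ge2$ with root $\theta$, so $\F_q(\theta)=\F_{q^d}$. Put $\eta=\theta^q-\theta\in\F_{q^d}$. Telescoping gives $s_j=\sum_{i=0}^{j-1}(\theta^{q^{i+1}}-\theta^{q^i})=\theta^{q^j}-\theta=y_j$. Hence $\Tr(\eta)=s_d=\theta^{q^d}-\theta=0$. For $1\le j\le d-1$ we have $s_j\ne0$: otherwise $\theta\in\F_{q^j}$ with $j<d$, contradicting $\deg P=d$. Finally, Lemma~\ref{lem:equiv} gives $M_d(\theta)=0$, and by \eqref{eq:nested} this is exactly the stated nested identity in the $s_j$.

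\emph{Converse.} Take $\eta$ as in the statement. Since $\Tr_{\F_{q^d}/\F_q}(\eta)=0$, additive Hilbert~90 gives $\theta\in\F_{q^d}$ with $\theta^q-\theta=\eta$. Concretely, the map $x\mapsto x^q-x$ on $\F_{q^d}$ is $\F_q$-linear with kernel $\F_q$, so its image has size $q^{d-1}$. That image lies in the trace-zero hyperplane, which also has size $q^{d-1}$, so the two coincide. Telescoping again gives $y_j:=\theta^{q^j}-\theta=s_j$.

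It remains to show $\deg_{\F_q}\theta=d$. Let $e$ be the degree of $\theta$ over $\F_q$; then $e\mid d$. If $e<d$, then $\theta^{q^e}=\theta$, so $s_e=0$ with $1\le e\le d-1$, contradicting the hypothesis. Hence $\theta$ has degree exactly $d$. Its minimal polynomial $P$ is then a monic prime of degree $d$ with $M_d(\theta)=0$ by \eqref{eq:nested}, and Lemma~\ref{lem:equiv} shows $P$ is $c$-Wieferich.

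The only step needing care is the degree-exactness argument, and it is exactly what the nonvanishing of the partial sums provides. Hilbert~90 is elementary here. The hypothesis $d\ge2$ only ensures the conditions are nonvacuous; for $d=1$ the nested expression would be the empty product form $1\neq0$, consistent with Lemma~\ref{lem:nolinear}.
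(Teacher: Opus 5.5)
Your proposal is correct and follows the paper's proof. In one direction you set $\eta=\theta^q-\theta$, telescope the partial sums to $s_j=y_j$, and apply the nested form \eqref{eq:nested} with Lemma~\ref{lem:equiv}; in the other you use additive Hilbert~90 together with the nonvanishing of the $s_j$ to force $\theta$ to have degree exactly $d$. You add details the paper leaves implicit, namely the counting proof of Hilbert~90 and the explicit $e\mid d$, $s_e=0$ argument.
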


\begin{proof}
If $P$ is such a prime with root $\theta$, take $\eta=\theta^q-\theta$; then
$s_j=y_j$, the trace of $\eta$ is $y_d=0$, the $s_j$ ($j<d$) are nonzero because
$\theta$ has degree exactly $d$, and the displayed expression is $M_d(\theta)=0$
by \eqref{eq:nested} and Lemma~\ref{lem:equiv}. Conversely, given such an
$\eta$, the additive Hilbert~90 theorem provides $\theta\in\F_{q^d}$ with
$\theta^q-\theta=\eta$; its partial sums are the $s_j$, so $\theta$ has degree
exactly $d$, and its minimal polynomial is a $c$-Wieferich prime of degree $d$
by \eqref{eq:nested} and Lemma~\ref{lem:equiv}.
\end{proof}

\section{The counterexample}

\begin{proof}[Proof of Theorem~\ref{thm:main}]
The three assertions to be checked are finite exact computations:
(i)~$P$ is irreducible over $\F_{19^3}$;
(ii)~$\rho_P(1)\equiv1\pmod{P^2}$, i.e.\ $P$ is $c$-Wieferich by
definition~\eqref{eq:def}; equivalently (Lemma~\ref{lem:equiv}) $P\mid M_5$;
(iii)~$19\nmid5$ (clear), and $P$ is non-fixed by Lemma~\ref{lem:nonfixed}.

For (ii) we verified both the definition~\eqref{eq:def} directly, by computing
the coefficients of $\rho_P$ through the Horner scheme
$\rho_{Tf+a}=\rho_T\circ\rho_f+a$ with all coefficient arithmetic reduced
modulo $P^2$, and the criterion $P\mid M_5$, by computing the brackets $[i]$
modulo $P$ through modular exponentiation. Complete code performing (i)--(ii) in
a few seconds appears in the Appendix; the same checks were reproduced
independently in Sage, PARI/GP, FLINT and the \texttt{galois} Python library;
the implementations of the criteria were calibrated on the known $c$-Wieferich
prime $T^6+T^4+T^3+T^2+2T+2$ of $\F_3[T]$~\cite{BB} (positive control) and on
ordinary primes (negative controls). Since $P$ is irreducible of degree $5$, it
divides $[5]$ (Lemma~\ref{lem:sqfree}), so $P\mid\gcd([5],M_5)$ and the latter
is nontrivial.
\end{proof}

\begin{remark}
As D.~Thakur pointed out to the author, the existence of a $c$-Wieferich prime
of degree $5$ already follows from $\gcd([5],M_5)\neq1$ together with
Lemmas~\ref{lem:sqfree} and~\ref{lem:nolinear}, without exhibiting $P$: any
nontrivial common factor is a product of primes of degree $5$ dividing $M_5$,
and each of these is $c$-Wieferich by Lemma~\ref{lem:equiv}. The explicit $P$ is
nevertheless recorded, both as a certificate and for its own interest.
\end{remark}

\section{A closed form for the common factor}

Since the criterion of Lemma~\ref{lem:equiv} is invariant under
$\theta\mapsto\theta+a$ (Lemma~\ref{lem:transl}), the counterexample comes with
a whole translation class of $q=19^3$ pairwise distinct $c$-Wieferich primes
$P(T-a)$ (Lemma~\ref{lem:nonfixed}). Their product has a strikingly simple
closed form.

\begin{theorem}\label{thm:closed}
Let $q=19^3$, let $P$ be as in Theorem~\ref{thm:main}, and set
\[
\mu(X)=X^5+5X^3+3X^2-4X-9\in\F_{19}[X],\qquad G=\mu\bigl(T^q-T\bigr)\in\F_q[T].
\]
Then:
\begin{itemize}
\item[(a)] $G=\prod_{a\in\F_q}P(T-a)$; in particular $G$ is squarefree of
degree $5\cdot19^3=34295$, a product of $6859$ distinct $c$-Wieferich primes of
degree $5$;
\item[(b)] $G$ divides $[5]$ and $G$ divides $M_5$; hence $G\mid\gcd([5],M_5)$.
\end{itemize}
Note that $\mu$ has coefficients in the prime field $\F_{19}$, not merely in
$\F_{19^3}$.
\end{theorem}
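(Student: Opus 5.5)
The plan is to reduce everything to Lemma~\ref{lem:norm}. That lemma gives $\prod_{a\in\F_q}P(T-a)=R_P(T^q-T)$, where $R_P(X)$ is the characteristic polynomial of multiplication by $\eta=\theta^q-\theta$ on $\A/(P)\cong\F_{q^5}$. Part (a) therefore reduces to the identity $R_P=\mu$. Since $P$ is irreducible of degree $5$ (Theorem~\ref{thm:main}), $R_P$ is a power of the minimal polynomial of $\eta$ over $\F_q$. Moreover $\Tr_{\F_{q^5}/\F_q}(\eta)=0$, consistent with $\mu$ having no $X^4$ term.

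To establish $R_P=\mu$, I will compute $\eta=\theta^q-\theta$ in $\F_q[T]/(P)$ by square-and-multiply, obtaining $\theta^{19^3}$ modulo $P$. I then form the $5\times5$ matrix of multiplication by $\eta$ in the basis $1,\theta,\dots,\theta^4$ and take its characteristic polynomial. This is a finite exact computation of the same kind as in the proof of Theorem~\ref{thm:main}, and I expect it to return exactly $X^5+5X^3+3X^2-4X-9$.

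Two independent checks will make this more convincing. First, that $\mu$ is irreducible over $\F_{19}$ (verified by computer, e.g.\ via $\gcd(\mu,X^{19^k}-X)=1$ for $k=1,2$). Second, that $\mu(\eta)=0$. Since $\gcd(5,3)=1$, an irreducible quintic over $\F_{19}$ stays irreducible over $\F_{19^3}$, so these two facts together already force $R_P=\mu$ as monic degree-$5$ polynomials over $\F_q$. This conceptual route also explains why the coefficients lie in the prime field: $\eta$ actually lies in $\F_{19^5}\subset\F_{q^5}$.

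Once $R_P=\mu$ is known, the rest of (a) is immediate. Lemma~\ref{lem:nonfixed} shows the $q$ translates $P(T-a)$ are pairwise distinct monic irreducibles, so their product is squarefree of degree $5q=34295$. Each translate is $c$-Wieferich by Lemma~\ref{lem:transl}.

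For (b), each $P(T-a)$ is irreducible of degree $5$, so it divides $[5]$ by Lemma~\ref{lem:sqfree}. It also divides $M_5$ by Lemma~\ref{lem:equiv}. Being pairwise coprime, their product $G$ divides both $[5]$ and $M_5$, hence divides $\gcd([5],M_5)$.

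The only real obstacle is the identity $R_P=\mu$. It is not conceptual but a certified computation. I would present it through the irreducibility-plus-$\mu(\eta)=0$ criterion, since that requires only reducing $\mu(\theta^q-\theta)$ modulo $P$ and a small irreducibility test over $\F_{19}$. Both are cheap and reproducible with the Appendix code.
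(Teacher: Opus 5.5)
Your proposal is correct and follows the paper's proof: Lemma~\ref{lem:norm} reduces (a) to the computed identity $R_P=\mu$, and (b) follows from Lemmas~\ref{lem:sqfree}, \ref{lem:transl} and~\ref{lem:equiv} together with the pairwise coprimality of the distinct translates (Lemma~\ref{lem:nonfixed}). Your alternative certificate for $R_P=\mu$ is also valid and is a small variation on the paper's direct characteristic-polynomial computation: you check that $\mu$ is irreducible over $\F_{19}$, hence over $\F_{19^3}$ since $\gcd(5,3)=1$, and that $\mu(\eta)=0$.
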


\begin{proof}
(a) By Lemma~\ref{lem:norm}, $\prod_a P(T-a)=R_P(T^q-T)$ where $R_P$ is the
characteristic polynomial of $\theta^q-\theta$ on $\F_q[T]/(P)$; a direct exact
computation (three lines of Sage; see the Appendix) gives $R_P=\mu$. The factors
$P(T-a)$ are irreducible of degree $5$, pairwise distinct by
Lemma~\ref{lem:nonfixed}, and $c$-Wieferich by Lemma~\ref{lem:transl}.

(b) Each factor $P(T-a)$ divides $[5]$ (Lemma~\ref{lem:sqfree}) and divides
$M_5$ (Lemmas~\ref{lem:transl} and~\ref{lem:equiv}); since the factors are
pairwise coprime, their product $G$ divides both.
\end{proof}

The checks of Theorem~\ref{thm:closed}(b) were, in addition, performed directly
at degree $34295$: the congruences $T^{q^5}\equiv T\pmod G$ and
$M_5\equiv0\pmod G$, as well as $P\mid G$ and the squarefreeness of $G$, were
verified independently in FLINT and in PARI/GP.

We conjecture that nothing else divides the gcd.

\begin{conjecture}\label{conj:exact}
$\gcd\bigl([5],M_5\bigr)=G$ in $\F_{19^3}[T]$. Equivalently, the $c$-Wieferich
primes of degree $5$ in $\F_{19^3}[T]$ are exactly the $6859$ translates
$P(T-a)$, $a\in\F_{19^3}$.
\end{conjecture}

By Lemmas~\ref{lem:sqfree} and~\ref{lem:nolinear}, Conjecture~\ref{conj:exact}
amounts to the statement that the translation class of $P$ exhausts the
degree-$5$ $c$-Wieferich primes of $\F_{19^3}[T]$. This is a completeness
statement about a search space of size $19^{12}$ (Lemma~\ref{lem:H90}). In the
companion paper~\cite{NG2} the method of Section~\ref{sec:grid} reduces it to a
single explicit question --- the possible existence of a degree-$5$
$c$-Wieferich prime over $\F_{19^3}$ whose associated $\eta=\theta^q-\theta$ has
degree $15$ over $\F_{19}$ --- and settles the part in which $\eta$ has degree
at most $5$ over $\F_{19}$, which the class of $P$ exhausts. The remaining case
would require a gcd computation between polynomials of degree of order
$3\cdot10^{11}$ and $2\cdot10^{15}$ over $\F_{19}$, or an enumeration of
$\F_{19^{15}}$; both lie beyond exact computation, and the conjecture is left
open.

\section{Minimality of the degree, and the prime fields}\label{sec:grid}

Degree $1$ is excluded, in every odd characteristic, in~\cite{Tha15} (the case
of degree $1$ being trivial; it also appears as~\cite[Cor.~4.5]{Bam17}). As
Thakur observes, degree $1$ is in fact immediate from
Lemma~\ref{lem:nolinear}: a linear prime would have to divide $\gcd([d],M_d)$,
which has no linear factor since $M_d(a)=1$ for every $a\in\F_q$. Degrees $2$
and $3$ are excluded, in every odd characteristic, in~\cite{Tha15}. The next
case is settled by the method of~\cite{NG2}:

\begin{theorem}[proved in~\cite{NG2}]\label{thm:deg4}
There is no $c$-Wieferich prime of degree $4$ in $\F_q[T]$, for any power $q$ of
any odd prime.
\end{theorem}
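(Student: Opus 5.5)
The plan is to reduce Theorem~\ref{thm:deg4} to a finite algebraic elimination that is uniform in $q$, using Lemma~\ref{lem:H90}. Suppose a $c$-Wieferich prime of degree $4$ exists. Then there is $\eta\in\F_{q^4}$ with $\Tr(\eta)=0$, nonzero partial sums $s_1,s_2,s_3$, and $1-s_3\bigl(1-s_2(1-s_1)\bigr)=0$. Write its conjugates as
\[
a=\eta,\quad b=\eta^q,\quad c=\eta^{q^2},\quad e=\eta^{q^3},\qquad a+b+c+e=0 .
\]
Then $s_1=a$, $s_2=a+b$ and $s_3=-e$, so the condition reads
\[
E(a,b,e)\;:=\;1+e\bigl(1-(a+b)(1-a)\bigr)=0 .
\]
The key observation is that Frobenius permutes $(a,b,c,e)$ cyclically and fixes the integer coefficients of $E$. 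Hence the tuple also satisfies the three conjugate equations $E(b,c,a)=0$, $E(c,e,b)=0$ and $E(e,a,c)=0$. Substituting $e=-a-b-c$ gives four equations in $a,b,c$ with coefficients in $\mathbf{Z}$. This system no longer mentions $q$. It describes a scheme $X$ over $\mathbf{Z}$ that must have an $\overline{\F}_p$-point satisfying the side conditions $a\neq0$, $a+b\neq0$, $e\neq0$ and their conjugates.

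The main step is to compute a Gr\"obner basis of this ideal over $\mathbf{Z}$. The side conditions are adjoined by saturation, that is, by adding a variable $t$ and the relation $t\,a\,e\,(a+b)(b+c)\cdots-1$. The hoped-for outcome is one of two cases. Either the ideal contains a nonzero integer $N$ whose only odd prime factors are finitely many explicit primes, or $X$ is zero-dimensional and its points are computed explicitly.

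In the first case every odd $p\nmid N$ is excluded at once, for every power $q$ of $p$, since the argument used only the cyclic structure. Each remaining odd prime $p\mid N$ is treated separately: I would recompute the Gr\"obner basis over $\F_p$ and determine the finitely many solutions $(a,b,c)\in\overline{\F}_p^3$.

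For those solutions a second, $q$-dependent condition must be used. The coordinates must be cyclically permuted by a single field automorphism $x\mapsto x^q$ of order dividing $4$, so that $b=a^q$, $c=b^q$ and $e=c^q$. In addition $\theta$ must have degree exactly $4$; equivalently, the orbit structure must be genuinely of length $4$ and not collapse to degree $1$ or $2$. I expect the explicit solution sets to be small, for instance permuted by the $\mathbf{Z}/4$-symmetry with fixed points coming from degree $1$ or $2$. It should then be checkable that no solution admits such a Frobenius of exact order $4$. Lemma~\ref{lem:nolinear} and the known degree-$2$ case \cite{Tha15} can be invoked to discard degenerate orbits.

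The main obstacle is the elimination over $\mathbf{Z}$. One must confirm that the $\mathbf{Z}/4$-cyclic system really has only finitely many bad primes, rather than a positive-dimensional component that survives in every characteristic. If a direct Gr\"obner computation is too heavy, I would first exploit the symmetry. The idea is to pass to invariants of the cyclic action, or to eliminate stepwise with resultants: solve $E(a,b,e)=0$, which is linear in $e$, for $e$, substitute into the remaining equations, and take successive resultants in $c$ and $b$. The goal is a univariate polynomial in $a$ whose content and discriminant isolate the exceptional primes.

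Characteristic $2$ should appear among the bad primes, consistent with Mauduit's examples, and it is excluded by hypothesis. Any odd exceptional prime would then require the hand analysis described above.
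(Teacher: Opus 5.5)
The paper does not prove this statement; it defers to the companion paper and describes the method only as a reduction to a finite, effective computation. So the comparison can only be with that description.

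Your framework is sound and the right one. The Frobenius-conjugate system in $a,b,c$ (with $e=-a-b-c$) has integer coefficients and no $q$ in it. Hence a nonzero integer $N$ in the ideal over $\mathbf{Z}$ would exclude every odd $p\nmid N$, for all powers $q$ of $p$ at once. The side conditions are in fact automatic in odd characteristic, so no saturation is needed:
\begin{itemize}
\item each of the four equations forces one coordinate to be nonzero;
\item $a+b=0$ forces $(a,b,c,e)=(1,-1,1,-1)$, and then $E(b,c,a)=0$ reads $2=0$.
\end{itemize}

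There is, however, a genuine gap: the proposal is not a proof. All the content of the theorem lies in the elimination, which you do not carry out and whose outcome you only hope for. Nothing you write shows any of the following:
\begin{itemize}
\item that the system has no $\overline{\mathbf{Q}}$-points;
\item what $N$ is;
\item that the fibres over the odd primes dividing $N$ carry no point with $x^q=\sigma(x)$.
\end{itemize}
Those fibres might moreover be positive-dimensional, not finite as you tacitly assume.

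Your fallback branch also fails as described. Suppose $X(\overline{\mathbf{Q}})$ is nonempty and finite. Then $X$ has $\overline{\F}_p$-points for all but finitely many $p$, so no finite list of primes remains to check, and listing the points decides nothing by itself. What is needed is a Galois-theoretic statement: no $g\in\mathrm{Gal}(\overline{\mathbf{Q}}/\mathbf{Q})$ sends a point $x$ to $\sigma(x)$, where $\sigma$ is the cyclic shift. This condition suffices: at primes where $X$ has étale reduction, the $q$-Frobenius acts on reduced points through such a $g$, so those primes are excluded simultaneously. It is also necessary, by Chebotarev: if some $g$ sent $x$ to $\sigma(x)$, the primes whose Frobenius is conjugate to $g$ would yield degree-$4$ $c$-Wieferich primes over $\F_p$. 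The finitely many bad primes would still need separate treatment.

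Without either the explicit $N$ with the per-prime analysis, or this Galois criterion, you have reduced the theorem to a computation but not proved it.
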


Consequently \emph{$5$ is the least possible degree of a counterexample in odd
characteristic}, and Theorem~\ref{thm:main} is degree-minimal.

It is natural to ask (as D.~Thakur did, in correspondence) whether a
counterexample exists over a prime field $\F_p$ itself. Lemma~\ref{lem:H90}
reduces this, for each pair $(d,p)$, to an exhaustive computation over the
$p^{d-1}$ trace-zero elements of $\F_{p^d}$, which we carried out by exact
vectorized arithmetic. Each cell of the following table is a definitive
determination, not a partial search.

\begin{proposition}\label{prop:grid}
There is no $c$-Wieferich prime of degree $d$ in $\F_p[T]$, for every pair
$(d,p)$ with $p\nmid d$ in the following list:
\[
\begin{array}{c|l}
d & \text{characteristics }p\ (\text{exhaustively verified, }q=p)\\\hline
5 & 3,\,7,\,11,\,13,\,17,\,19,\,23,\,29,\,31,\,37\\
6 & 5,\,7,\,11,\,13,\,17,\,19,\,23\\
7 & 3,\,5,\,11,\,13\\
8 & 3,\,5,\,7\\
9 & 5\\
10 & 3\\
11 & 3,\,5\\
13 & 3\\
14 & 3
\end{array}
\]
\end{proposition}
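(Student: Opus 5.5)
The proof is a finite exact computation for each pair $(d,p)$ in the table, organized through Lemma~\ref{lem:H90}. For fixed $(d,p)$ with $q=p$, a $c$-Wieferich prime of degree $d$ exists if and only if some $\eta\in\F_{p^d}$ satisfies three conditions: $\Tr_{\F_{p^d}/\F_p}(\eta)=0$, the partial sums $s_1,\dots,s_{d-1}$ are all nonzero, and the nested expression $1-s_{d-1}(1-s_{d-2}(\cdots(1-s_1)\cdots))$ vanishes. So for each cell I will enumerate the $p^{d-1}$ trace-zero elements and check that none satisfies all three. Since $p\nmid d$ plays no role in the criterion, the hypothesis only serves to restrict attention to cells relevant to Thakur's question.

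Concretely, I fix an irreducible polynomial of degree $d$ over $\F_p$ and represent $\F_{p^d}$ by coordinate vectors over $\F_p$. I precompute the matrix $\Phi$ of the Frobenius $x\mapsto x^p$. The trace-zero hyperplane is cut out by a single linear form, so it can be parametrized by $\F_p^{d-1}$.

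For each $\eta$ in this hyperplane:
\begin{itemize}
\item compute the conjugates $\eta^{p^j}=\Phi^j\eta$ by linear algebra;
\item form the partial sums $s_j$ additively;
\item evaluate the nested expression with $d-2$ field multiplications, starting from $u_1=1-s_1$ and setting $u_{j}=1-s_ju_{j-1}$;
\item record $\eta$ if $u_{d-1}=0$ and all $s_j\neq0$.
\end{itemize}
Vectorizing over all $\eta$ at once, with multiplication done via precomputed structure constants modulo $p$, keeps everything in exact integer arithmetic.

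Two reductions lighten the work. Replacing $\eta$ by $\eta^p$ shifts the conjugates cyclically but changes the nested expression non-trivially, so I will not rely on Galois symmetry beyond a sanity check. Translation invariance (Lemma~\ref{lem:transl}) is already built in, since $\eta=\theta^p-\theta$ forgets translations. Scaling by $\F_p^\times$ is not a symmetry, because of the constant $1$. So the honest route is the full enumeration, which is about $37^4\approx1.9\cdot10^6$ elements for $(5,37)$ and $3^{13}\approx1.6\cdot10^6$ for $(14,3)$, and is cheap.

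The main obstacle is certification rather than mathematics. A negative result ("no such $\eta$") is only as reliable as the implementation. I will therefore calibrate the code on positive controls: the known prime $T^6+T^4+T^3+T^2+2T+2$ over $\F_3$ should produce a hit for $(d,p)=(6,3)$, as should the class of Theorem~\ref{thm:closed} over $\F_{19^3}$ if that case is adapted. As a further check, I will recompute a few cells independently by the gcd route, $\gcd([d],M_d)$ in $\F_p[T]$, which is feasible for the small cells, and confirm that it equals $1$ there.
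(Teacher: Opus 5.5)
Your proposal is correct and follows the paper's own argument. Lemma~\ref{lem:H90} reduces each cell to an exact, vectorized scan of the $p^{d-1}$ trace-zero elements of $\F_{p^d}$, calibrated on positive controls (the paper uses $T^5+4T+1$ over $\F_5$, $T^6+T^4+T^3+T^2+2T+2$ over $\F_3$ and $T^7+6T+3$ over $\F_7$). One side remark is wrong but harmless: since $s_j(\eta^p)=s_j(\eta)^p$, the nested expression for $\eta^p$ is the $p$-th power of the one for $\eta$, so Frobenius does preserve the zero set and could shrink the scan by a factor of about $d$, although your full enumeration does not need this.
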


For the conforming cell $(d,p)=(5,5)$, the same scan finds precisely the known
$c$-Wieferich prime $T^5+4T+1$ of $\F_5[T]$~\cite{BB}, and for
$(d,p)\in\{(6,3),(7,7)\}$ it recovers the known primes
$T^6+T^4+T^3+T^2+2T+2$ and $T^7+6T+3$; these serve as positive controls of the
implementation.

The reduction of Lemma~\ref{lem:H90} makes further prime-field cells accessible
by the same exact computation, and the empty region above is being extended.
The example of Theorem~\ref{thm:main} lives over a cubic extension in an
essential way, and nothing in the present data points either towards or against
the existence of a prime-field example.

\section{Methodology and use of AI assistance}\label{sec:method}

This work was carried out by the author, an independent researcher with no
formal mathematical training, in sustained collaboration with an AI assistant
(Claude Opus~4.8, Anthropic), used through its standard paid consumer
interface. The division of labour, stated here in the interest of full
transparency and in line with editorial policies on AI-assisted work, was as
follows. The author, who has no formal mathematical training, set the research
direction and the criteria for the problems to be pursued, and contributed a
structural, visual reading of the objects involved; the AI assistant proposed
specific problems meeting those criteria, and supplied the mathematical domain
knowledge, the formalization, the drafting, and the design and execution of all
computations, under the author's direction. The strategy for each problem
emerged from the dialogue between the two. Lacking the training to verify the
mathematics directly, the author relied throughout on exact computational
checks, on reproduction across independent systems, and, for the present
result, on the independent verification kindly carried out by D.~Thakur. All
computations were performed in standard public computer-algebra systems
(SageMath, PARI/GP, FLINT, and the \texttt{galois} Python library), in exact
arithmetic, and every principal claim was reproduced on at least two mutually
independent engines after calibration on known cases from the literature.
Complete verification code for the main theorem is given in the Appendix, and
all scripts are available from the author.

\subsection*{Acknowledgements}
The author is deeply grateful to Dinesh Thakur for his generous correspondence:
for independently verifying the counterexample in Magma, for pointing out that
degrees $2$ and $3$ were already settled in~\cite{Tha15}, for the observation
that degree $1$ follows from Lemma~\ref{lem:nolinear}, for simplifications of
the verification of Theorem~\ref{thm:closed} (in particular the observation that
squarefreeness of $[n]$ makes two of the author's checks unnecessary), for
raising the prime-field question of Section~\ref{sec:grid}, and for his
encouragement.

\appendix
\section{Verification code}

The following self-contained Sage code verifies Theorem~\ref{thm:main} and the
computation $R_P=\mu$ of Theorem~\ref{thm:closed}(a) in a few seconds.

\begin{verbatim}
p = 19; q = p^3
R0.<x> = GF(p)[]
F.<c> = GF(p^3, modulus = x^3 - 8*x^2 - 4*x - 11)
R.<T> = PolynomialRing(F)
P = (T^5 + (11+17*c+9*c^2)*T^4 + (3+7*c+18*c^2)*T^3
     + (2+5*c+6*c^2)*T^2 + (3+3*c+11*c^2)*T + (6+17*c+5*c^2))
print(P.is_irreducible())            # irreducibility
S = R.quotient(P); t = S.gen()
M = S(1)
for i in range(1, 5): M = 1 - (t^(q^i) - t)*M
print(M == 0, t^(q^5) - t == 0)      # P | M_5 and P | [5]
S2 = R.quotient(P^2); u = S2.gen()
rho = []                             # Carlitz rho_P by Horner
for cj in P.coefficients(sparse=False)[::-1]:
    new = [S2(0)]*(len(rho)+1)
    for i, a in enumerate(rho):
        new[i]   += u*a
        new[i+1] += a.lift()(u^q)    # a -> a^{(q)}: F_q-coeffs are fixed
    new[0] += S2(cj)
    rho = new
D = (sum(rho) - 1).lift()
print(D % P == 0 and (D // P) % P == 0)   # v_P(rho_P(1)-1) >= 2
mu = (t^q - t).matrix().charpoly('X')     # R_P of Lemma 2.6
print(mu)                                 # X^5+5X^3+3X^2-4X-9
\end{verbatim}

For Theorem~\ref{thm:closed}(b) at full degree $34295$, the congruences
$T^{q^5}\equiv T\pmod G$ and $M_5\equiv0\pmod G$ with $G=\mu(T^q-T)$ are a
handful of square-and-multiply passes modulo $G$; they were carried out in
FLINT and in PARI/GP.

\end{document}